\theoremstyle{plain}
\newtheorem{theorem}[equation]{Theorem}
\newtheorem{lemma}[equation]{Lemma}
\theoremstyle{definition}
\theoremstyle{remark}
\newtheorem{remark}[equation]{Remark}
\newcommand{\dv}{\operatorname{div}}
\newcommand{\dist}{\operatorname{dist}}
\newcommand{\diam}{\operatorname{diam}}
\newcommand{\tr}{\operatorname{tr}}
\numberwithin{equation}{section}
\newcommand{\bR}{\mathbb{R}}
\providecommand{\set}[1]{\{#1\}}
\providecommand{\abs}[1]{\lvert#1\rvert}
\providecommand{\Abs}[1]{\left\lvert#1\right\rvert}
\providecommand{\norm}[1]{\lVert#1\rVert}
\begin{document}
\title[Green's functions of nondivergent elliptic operators]
{Note on Green's functions of non-divergence elliptic operators with continuous coefficients}

\author[H. Dong]{Hongjie Dong}
\address[H. Dong]{Division of Applied Mathematics, Brown University,
182 George Street, Providence, RI 02912, United States of America}
\email{Hongjie\_Dong@brown.edu}
\thanks{H. Dong was partially supported by the Simons Foundation, grant no. 709545, a Simons fellowship, grant no. 007638, and the NSF under agreement DMS-2055244.}

\author[S. Kim]{Seick Kim}
\address[S. Kim]{Department of Mathematics, Yonsei University, 50 Yonsei-ro, Seodaemun-gu, Seoul 03722, Republic of Korea}
\email{kimseick@yonsei.ac.kr}
\thanks{S. Kim is partially supported by National Research Foundation of Korea (NRF) Grant No. NRF-2019R1A2C2002724 and No. NRF-2022R1A2C1003322.}

\author[S. Lee]{Sungjin Lee}
\address[S. Lee]{Department of Mathematics, Yonsei University, 50 Yonsei-ro, Seodaemun-gu, Seoul 03722, Republic of Korea}
\email{sungjinlee@yonsei.ac.kr}

\subjclass[2010]{Primary 35J08, 35B45 ; Secondary 35J47}

\keywords{Green's function; Non-divergence elliptic equation; Dini mean oscillation}

\begin{abstract}
We improve a result in Kim and Lee (Ann. Appl. Math. 37(2):111--130, 2021): showing that if the coefficients of an elliptic operator in non-divergence form are of Dini mean oscillation, then its Green's function has the same asymptotic behavior near the pole $x_0$ as that of the corresponding Green's function for the elliptic equation with constant coefficients frozen at $x_0$.
\end{abstract}
\maketitle

\section{Introduction and main results}
We consider an elliptic operator $L$ in non-divergence form
\begin{equation*}						
L u=  a^{ij}(x) D_{ij} u,
\end{equation*}
where the coefficients $\mathbf{A}:=(a^{ij})$ are symmetric and satisfy the ellipticity condition:
\begin{equation}					\label{ellipticity-nd}
\lambda \abs{\xi}^2 \le a^{ij}(x) \xi^i \xi^j \le \Lambda \abs{\xi}^2,\quad \forall x \in \Omega,
\end{equation}
where $\lambda$ and $\Lambda$ are positive constants and $\Omega$ is a domain in $\bR^n$ with $n \ge 3$.
Here and below, we use the usual summation convention over repeated indices.

It is well known that the Green's function $G(x,y)$ for an elliptic operator in divergence form has the pointwise bound
\begin{equation}				\label{bound1}
G(x,y) \le C \abs{x-y}^{2-n}	\qquad (x, y \in \Omega,\;\;x\neq y)
\end{equation}
even when the coefficients are merely measurable; see e.g. \cite{GW82, LSW, HK07}.
However, the Green's function for an elliptic operator in non-divergence form does not necessarily enjoy the pointwise bound \eqref{bound1} even in the case when the coefficients $\mathbf A$ are uniformly continuous; see \cite{Bauman84}.
Recently, it is shown in \cite{HK20} that if the coefficients $\mathbf{A}$ are of Dini mean oscillation and if the domain is bounded and has $C^{1,1}$ boundary, then the Green's function exists and satisfies the pointwise bound \eqref{bound1}. Let us recall the definition of Dini mean oscillation.
For $x\in \bR^n$ and $r>0$, we denote by $B(x,r)$ the open ball with radius $r$ centered at $x$, and write $\Omega(x,r):=\Omega \cap B(x,r)$.
We denote
\[
\omega_{\mathbf A}(r, x):= \fint_{\Omega(x,r)} \,\abs{\mathbf{A}(y)-\bar {\mathbf A}_{\Omega(x,r)}}\,dy, \quad \text{ where } \;\bar{\mathbf A}_{\Omega(x,r)} :=\fint_{\Omega(x,r)} \mathbf{A},
\]
and we write
\[
\omega_{\mathbf A}(r, D):= \sup_{x \in D} \omega_{\mathbf A}(r, x) \quad \text{and}\quad \omega_{\mathbf A}(r)=\omega_{\mathbf A}(r, \overline{\Omega}).
\]
We say that $\mathbf{A}$ is of Dini mean oscillation in $\Omega$ if $\omega_{\mathbf A}(r)$ satisfies the Dini's condition
\begin{equation}				\label{eq2116sat}
\int_0^1 \frac{\omega_{\mathbf A}(t)}t \,dt <+\infty.
\end{equation}
Before proceeding further, let us clarify the relationship between Dini continuity and Dini mean oscillation.
We say that $\mathbf A$ is Dini continuous at $x_0 \in \Omega$ if
\begin{equation}				\label{eq_rho}
\varrho_{\mathbf A} (r, x_0)= \sup \set{\abs{\mathbf A(x)-\mathbf A(x_0)}:  x \in \Omega, \;\abs{x-x_0} \le r}
\end{equation}
satisfies the Dini's condition
\[
\int_0^1 \frac{\varrho_{\mathbf A}(t, x_0)}t \,dt <+\infty
\]
and that $\mathbf A$ is uniformly Dini continuous in $\Omega$ if
\[
\varrho_{\mathbf A}(r):= \sup_{x \in \Omega} \varrho_{\mathbf A}(r, x) \quad \text{satisfies}\quad \int_0^1 \frac{\varrho_{\mathbf A}(t)}t \,dt <+\infty.
\]
It is clear that if $\mathbf{A}$ is uniformly Dini continuous in $\Omega$, then $\mathbf{A}$ is of Dini mean oscillation in $\Omega$.
If $\mathbf{A}$ is of Dini mean oscillation in $\Omega$, then there is a modification   $\bar{\mathbf A}$ of $\mathbf A$ (i.e.,  $\bar{\mathbf A} =  \mathbf A$ a.e.) such that $\bar{\mathbf A}$ is uniformly continuous in $\Omega$ with its modulus of continuity controlled by $\omega_\mathbf{A}$. See \cite[Appendix]{HK20} for a proof.
However, a function of Dini mean oscillation is not necessarily Dini continuous; see \cite{DK17} for an example.

In a recent article \cite{KL21}, we gave an alternative proof of the above mentioned result in \cite{HK20} and established a sharp comparison with the Green's function for a constant coefficient operator.
In particular, we showed that
\begin{equation}				\label{asymptotics}
G(x_0,x)-G_{x_0}(x_0, x)= o(\abs{x-x_0}^{2-n})\;\text{ as }\; x\to x_0,
\end{equation}
where $G_{x_0}$ is the Green's function of the constant coefficient operator $L_{x_0}$ given by
\begin{equation}				\label{eq2203sun}
L_{x_0}u:= a^{ij}(x_0) D_{ij} u,
\end{equation}
provided that the mean oscillation of $\mathbf A$ satisfies so-called ``double Dini condition'' near $x_0$, that is, we have
\begin{equation}				\label{eq:ddmo}
\int_0^1 \frac{1}{s} \int_0^s \frac{\omega_{\mathbf A}(t,\Omega(x_0, r_0))}{t}\,dt\,ds = \int_0^1 \frac{\omega_{\mathbf A}(t,\Omega(x_0, r_0))\ln \frac{1}{t}}{t}\,dt <+\infty
\end{equation}
for some $r_0>0$.
It should be noted that the argument in \cite[Appendix]{HK20} reveals that if $\mathbf A$ satisfies the double Dini mean oscillation condition \eqref{eq:ddmo}, then there exists a modification of $\mathbf A$ that is Dini continuous in a neighborhood of $x_0$.
The asymptotic behavior \eqref{asymptotics} is well known for the Green's functions for elliptic operators in divergence form with continuous coefficients; see \cite{DM95}.
However, in the non-divergence form setting, this is a new result and it is one of the novelties in \cite{KL21}.

The aim of this note is to show that one can replace the condition \eqref{eq:ddmo} with a simple condition that $\mathbf A$ is continuous at $x_0$, that is,
\begin{equation}				\label{eq2114sat}
\lim_{r\to 0} \varrho_{\mathbf A}(r, x_0)=0.
\end{equation}
Notice that the coefficient matrix $\mathbf A=(a^{ij})$, which is assumed to be of Dini mean oscillation in $\Omega$, has a modification that is uniformly continuous in $\Omega$.
Therefore, without loss of generality, we shall hereafter assume that $\mathbf A$ is uniformly continuous in $\Omega$ so that \eqref{eq2114sat} holds for all $x_0 \in \Omega$.

We now state our main theorems.

\begin{theorem}					\label{thm01}
Let $\Omega$ be a bounded $C^{1,1}$ domain in $\bR^n$ with $n\ge 3$.
Assume the coefficients $\mathbf{A}=(a^{ij})$ of the operator $L$ satisfy the  condition \eqref{ellipticity-nd} and are of Dini mean oscillation in $\Omega$.
Then there exists the Green's function $G(x,y)$ of the operator $L$ in $\Omega$  and it satisfies the pointwise bound \eqref{bound1}.
Moreover, for any $x_0 \in \Omega$, we have
\begin{equation}				\label{eq1725sun}
\lim_{x\to x_0}\, \abs{x-x_0}^{n-2}\,\abs{G(x_0, x)-G_{x_0}(x_0, x)}=0,
\end{equation}
where $G_{x_0}$ is the Green's function of the constant coefficient operator $L_{x_0}$ as in \eqref{eq2203sun}.
\end{theorem}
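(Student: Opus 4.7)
The plan is a rescaling/compactness argument. First I set $u(x) := G(x_0,x)$ and, for $r>0$ small, introduce the rescaled Green's function $u_r(\xi) := r^{n-2}\,u(x_0 + r\xi)$ on $\Omega_r := r^{-1}(\Omega - x_0)$. A direct scaling computation identifies $u_r$ as the Green's function of $L_r := a^{ij}(x_0 + r\,\cdot)\,D_{ij}$ on $\Omega_r$ with pole at $0$; the coefficients of $L_r$ are uniformly elliptic and, by the assumed continuity of $\mathbf{A}$ at $x_0$, converge uniformly on compact sets to $a^{ij}(x_0)$ as $r\to 0$. Since $\Gamma_{x_0}$ is $(2-n)$-homogeneous, $r^{n-2}\Gamma_{x_0}(r\xi)=\Gamma_{x_0}(\xi)$, so \eqref{eq1725sun} is equivalent to $u_r \to \Gamma_{x_0}$ uniformly on $\{|\xi|=1\}$ as $r\to 0$. (The difference between $G_{x_0}(x_0,\cdot)$ and the fundamental solution $\Gamma_{x_0}(\cdot-x_0)$ is a smooth $L_{x_0}$-harmonic correction near $x_0$, so it does not contribute to the blow-up.)

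Next, the bound \eqref{bound1} gives $u_r(\xi)\le C|\xi|^{2-n}$ uniformly in $r$, and because $L_r u_r=0$ classically away from $0$, Krylov--Safonov together with interior $W^{2,p}$ estimates for Dini mean oscillation coefficients supplies uniform local regularity in $\bR^n\setminus\{0\}$. Along any sequence $r_k\to 0$, Arzelà--Ascoli and a diagonal argument produce a subsequence with $u_{r_k}\to u_*$ locally uniformly (and weakly in $W^{2,p}_{\mathrm{loc}}$) on $\bR^n\setminus\{0\}$; passing to the limit in the PDE, using uniform convergence of the coefficients, yields $L_{x_0}u_*=0$ in $\bR^n\setminus\{0\}$, with $u_*\ge 0$, $u_*(\xi)\le C|\xi|^{2-n}$, and decay at infinity. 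A Bôcher-type theorem for the constant-coefficient operator $L_{x_0}$ (reducible to the Laplace case by a linear change of variables) then forces $u_*=c\,\Gamma_{x_0}$ for some $c\ge 0$.

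Pinning down $c=1$ is the heart of the argument. For the lower bound $c\ge 1$ I would fix a large $R$ and compare $u_r$ on $B_R\Subset\Omega_r$ with $\tilde u_{r,R}$, the Green's function of $L_r$ on $B_R$ with pole at $0$ (available from \cite{HK20} because the Dini mean oscillation modulus of the $L_r$-coefficients on $B_R$ controls $\int_0^1\omega(r\rho)/\rho\,d\rho=\int_0^{r}\omega(s)/s\,ds$, which is uniformly bounded and in fact tends to $0$): the maximum principle gives $u_r\ge\tilde u_{r,R}$, stability of Green's functions on the fixed bounded domain $B_R$ under these uniform Dini MO perturbations gives $\tilde u_{r,R}\to\tilde u_{0,R}$, the $L_{x_0}$-Green's function on $B_R$, and letting $R\to\infty$ drives the harmonic correction $\Gamma_{x_0}-\tilde u_{0,R}$ (bounded by $CR^{2-n}$) to zero, yielding $u_*\ge\Gamma_{x_0}$. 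For the upper bound $c\le 1$, I would extend the rescaled coefficients outside $\Omega_r$ so that they coincide with the constants $a^{ij}(x_0)$ outside a large ball; for small $r$ this is a uniformly small $L^\infty$ perturbation of $L_{x_0}$ on all of $\bR^n$, and a parametrix/Neumann series construction produces a fundamental solution $\bar u_r$ of the extended operator on $\bR^n$ with $\bar u_r\to\Gamma_{x_0}$ locally uniformly, against which $u_r$ may be majorized by the maximum principle on $\Omega_r$. Combining the two bounds forces $c=1$ for every subsequential limit, so $u_r\to\Gamma_{x_0}$ locally uniformly on $\bR^n\setminus\{0\}$, hence uniformly on $\{|\xi|=1\}$, which is \eqref{eq1725sun}. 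The main obstacle I anticipate is making the upper-bound majorant rigorous: ruling out the possibility that the rescaled Green's function carries strictly more ``mass'' at $0$ than $\Gamma_{x_0}$ is precisely the step where the global Dini mean oscillation hypothesis (and not merely continuity at $x_0$) is essential.
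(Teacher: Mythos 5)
There are two genuine gaps here, one structural and one at the heart of the argument. First, the function $x\mapsto G(x_0,x)$ appearing in \eqref{eq1725sun} is, in its second variable, the Green's function of the \emph{adjoint} operator: $G(x_0,x)=G^\ast(x,x_0)$, so away from the pole your $u_r$ solves the double-divergence equation $D_{ij}(a^{ij}u)=0$, not $a^{ij}D_{ij}u=0$. (Your setup would be appropriate for $G(x,x_0)$, i.e.\ for \eqref{eq1726sun} of Theorem~\ref{thm02}, but the two are not related by symmetry when the coefficients are variable.) Consequently Krylov--Safonov, interior $W^{2,p}$ estimates, and the classical maximum principle are not available for $u_r$; local boundedness and equicontinuity of adjoint solutions is precisely the delicate point in the non-divergence setting --- it can fail for merely uniformly continuous coefficients (Bauman's example) --- and is what Lemma~\ref{lem02}, based on \cite{DK17,DEK18}, is designed to supply under the Dini mean oscillation hypothesis. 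Your compactness step could conceivably be repaired by replacing Krylov--Safonov with those adjoint estimates (using, as you note, that $\int_0^1\omega_{\mathbf A}(rt)/t\,dt=\int_0^r\omega_{\mathbf A}(s)/s\,ds\to0$), but as written it rests on regularity theory for the wrong equation.

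The second, decisive gap is the normalization $c=1$, which you acknowledge. Every subsequential limit being $c\,\Gamma_{x_0}$ with $c$ unknown gives no information about \eqref{eq1725sun}, so this step \emph{is} the theorem. The lower bound relies on ``stability of Green's functions on $B_R$ under uniform Dini MO perturbations,'' which already contains control of the constant in front of the singularity and is therefore of the same depth as the statement being proved; the upper bound relies on a parametrix for an $L^\infty$-small perturbation of $L_{x_0}$ on $\bR^n$, but for non-divergence operators an $L^\infty$-small perturbation does not yield a fundamental solution with the bound $C|x|^{2-n}$ (VMO or Dini mean oscillation is needed), and majorizing an adjoint Green's function by it would require a separate argument. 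The paper avoids all of this by a direct perturbative computation: it writes $G^\ast(\cdot,x_0)=G_{x_0}(\cdot,x_0)+v$ where $L^\ast v=\dv^2\bigl[-(\mathbf A-\mathbf A_{x_0})G_{x_0}(\cdot,x_0)\bigr]$ with $v=0$ on $\partial\Omega$, splits the data by a cutoff at scale $\delta=\kappa r$, and combines the global $L^p$ bound of Lemma~\ref{lem01} with the pointwise adjoint estimate of Lemma~\ref{lem02} and the oscillation estimate of Lemma~\ref{lem03} to get $\abs{v(y_0)}\le C\epsilon\abs{y_0-x_0}^{2-n}$ directly, with no compactness argument and hence no normalization to pin down. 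I would encourage you to rework your attempt along those perturbative lines.
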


\begin{theorem}					\label{thm02}
Under the same hypothesis of Theorem~\ref{thm01}, we also have
\begin{gather}				\label{eq1726sun}
\lim_{x\to x_0}\, \abs{x-x_0}^{n-2}\,\abs{G(x, x_0)-G_{x_0}(x, x_0)}=0,\\
						\label{eq1533th}
\lim_{x\to x_0}\, \abs{x-x_0}^{n-1}\,\abs{D_xG(x, x_0)-D_xG_{x_0}(x, x_0)}=0,\\
						\label{eq1534th}
\lim_{x\to x_0}\, \abs{x-x_0}^{n}\,\abs{D_x^2 G(x, x_0)-D_x^2G_{x_0}(x, x_0)}=0,
\end{gather}
for any $x_0 \in \Omega$.
\end{theorem}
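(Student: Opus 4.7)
My plan is to prove all three limits simultaneously by a blow-up / compactness analysis of the difference
\[
w(x) := G(x, x_0) - G_{x_0}(x, x_0)
\]
near $x_0$. Since both $G(\cdot, x_0)$ and $G_{x_0}(\cdot, x_0)$ vanish on $\partial\Omega$ and carry the same Dirac source at $x_0$, $w$ is a classical solution of
\[
Lw = -\bigl(a^{ij}(\cdot) - a^{ij}(x_0)\bigr)\,D_{ij} G_{x_0}(\cdot, x_0) =: f \quad \text{in } \Omega \setminus \{x_0\}, \quad w|_{\partial\Omega} = 0.
\]
The constant-coefficient estimate $|D^2 G_{x_0}(x,x_0)| \le C|x-x_0|^{-n}$, together with the continuity hypothesis \eqref{eq2114sat}, yields $|f(x)| \le C\,\varrho_{\mathbf A}(|x-x_0|, x_0)\,|x-x_0|^{-n}$, in which the crucial factor $\varrho_{\mathbf A}(\cdot, x_0)$ vanishes at $0$.

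For small $\rho > 0$ I would rescale by setting $w_\rho(y) := \rho^{n-2} w(x_0 + \rho y)$, $a^{ij}_\rho(y) := a^{ij}(x_0 + \rho y)$, $f_\rho(y) := \rho^n f(x_0 + \rho y)$, so that $a^{ij}_\rho D_{ij} w_\rho = f_\rho$. The pointwise bound \eqref{bound1} (from Theorem~\ref{thm01}) applied to both $G$ and $G_{x_0}$ gives $|w_\rho(y)| \le C|y|^{2-n}$ uniformly in $\rho$; the rescaled coefficients inherit a uniform Dini mean oscillation modulus---indeed $\omega_{a^{ij}_\rho}(r) \le \omega_{\mathbf A}(\rho r)$---and converge uniformly on compacts to the constants $a^{ij}(x_0)$ by \eqref{eq2114sat}; and $\|f_\rho\|_{L^\infty(\{R^{-1} \le |y| \le R\})} \le C R^n \varrho_{\mathbf A}(R\rho, x_0) \to 0$ for every fixed $R > 0$. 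The interior $C^2$-regularity theory for non-divergence elliptic operators with Dini mean oscillation coefficients then delivers uniform $C^2$ bounds on $w_\rho$, with a common modulus of continuity for $D^2 w_\rho$, on compact subsets of $\bR^n \setminus \{0\}$. By Arzelà--Ascoli and a diagonal extraction, some subsequence $w_{\rho_k}$ converges in $C^2_{\loc}(\bR^n \setminus \{0\})$ to a limit $w_0$ satisfying $L_{x_0} w_0 = 0$ on $\bR^n \setminus \{0\}$ with $|w_0(y)| \le C|y|^{2-n}$.

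The main obstacle will be to identify $w_0 \equiv 0$. The growth bound together with removable-singularity and Liouville theory for the symmetric constant operator $L_{x_0}$ forces $w_0 = c\,\Gamma_{x_0}$ for some $c \in \bR$, where $\Gamma_{x_0}$ is the fundamental solution of $L_{x_0}$; the task is to eliminate the constant $c$. For this I plan to invoke Theorem~\ref{thm01} through the duality identity $G(x, x_0) = G^*(x_0, x)$, where $G^*$ is the Green's function of the (distributional) adjoint $L^* v = D_{ij}(a^{ij} v)$. The blow-up scheme used for Theorem~\ref{thm01} can be carried out for the adjoint instead---which is of divergence type but shares the same, self-adjoint frozen operator $L_{x_0}$---to give $|G^*(x_0, x) - G_{x_0}(x_0, x)| = o(|x-x_0|^{2-n})$; combined with the symmetry $G_{x_0}(x, x_0) = G_{x_0}(x_0, x)$, this reads $|w(x)| = o(|x-x_0|^{2-n})$, so evaluating on $\{|y| = 1\}$ pins down $c = 0$.

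Once $w_0 \equiv 0$ is secured, uniqueness of the limit promotes subsequential to full $C^2_{\loc}$-convergence $w_\rho \to 0$ as $\rho \to 0^+$, and restricting to the unit sphere and unwinding the scaling yields
\[
\sup_{|x-x_0| = \rho}\Bigl(\rho^{n-2} |w(x)| + \rho^{n-1} |D_x w(x)| + \rho^n |D_x^2 w(x)|\Bigr) \to 0 \quad \text{as } \rho \to 0^+,
\]
which is precisely the content of \eqref{eq1726sun}, \eqref{eq1533th}, and \eqref{eq1534th}, since $\rho^{n-2}$, $\rho^{n-1}$, $\rho^n$ are the natural scaling weights for $w$, $Dw$, $D^2 w$ in this blow-up.
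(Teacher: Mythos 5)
Your overall architecture is workable, but the crux of the theorem is concealed in the sentence where you ``invoke Theorem~\ref{thm01} through the duality identity'' to kill the constant $c$. What you need there is precisely \eqref{eq1726sun} itself: an estimate for $G^\ast(x_0,y_0)-G_{x_0}(x_0,y_0)$ in which the \emph{pole} $y_0$ of the adjoint Green's function moves toward $x_0$, while the comparison operator stays frozen at the fixed point $x_0$ and the evaluation point is the fixed $x_0$. This is \emph{not} the adjoint version of Theorem~\ref{thm01}; that statement (recorded in the paper's first remark) concerns $G^\ast(x,x_0)-G_{x_0}(x,x_0)$ with the pole fixed at $x_0$, and is what the proof of Theorem~\ref{thm01} already yields. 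To obtain the moving-pole statement one must redo the decomposition with $\tilde{\mathbf g}=-(\mathbf A-\mathbf A_{x_0})\,G_{x_0}(\cdot,y_0)$ (coefficient difference centered at $x_0$, singularity at $y_0$), cut off near $y_0$, and apply the interior $L^\infty$ estimate for adjoint solutions on a ball centered at $x_0$; the resulting bounds then involve $\varrho_{\mathbf A}(\delta+5r,x_0)$ and $\varrho_{\mathbf A}(3r,x_0)$ because points near the pole sit at distance comparable to $r=\tfrac15\abs{x_0-y_0}$ from the frozen point. This modification is routine but it is the entire content of the first half of the proof; as written, your blow-up contributes nothing toward \eqref{eq1726sun}, and the whole burden falls on the deferred adaptation, which you have not carried out. (Note also that one cannot instead correct $G_{x_0}(\cdot,x_0)$ directly by solving $Lu=f$ with zero boundary data, since $\abs{f}\sim\varrho_{\mathbf A}(\abs{x-x_0},x_0)\abs{x-x_0}^{-n}$ need not be integrable; the detour through the adjoint is forced.)

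For the derivative estimates \eqref{eq1533th}--\eqref{eq1534th} your scaling argument is a valid alternative, though heavier than necessary: once \eqref{eq1726sun} is known, $w_\rho\to0$ locally uniformly on $\bR^n\setminus\{0\}$ is immediate, so the removable-singularity and Liouville steps are superfluous, and the only real issue is uniform interior $C^2$ control. There you must verify more than $\norm{f_\rho}_{L^\infty}\to0$ on annuli: the $C^2$ estimate for operators with Dini mean oscillation coefficients, and the equicontinuity of $D^2w_\rho$ needed for Arzel\`a--Ascoli, require a Dini modulus of mean oscillation of $f_\rho$ that is uniform in $\rho$ and degenerates as $\rho\to0$. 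This does hold, with $\omega_{f_\rho}(t)\lesssim R^{n}\bigl(\omega_{\mathbf A}(\rho t)+t\,\varrho_{\mathbf A}(C\rho R,x_0)\bigr)$ on the annulus of radius $R$, but it must be checked; it is exactly the estimate the paper proves in \eqref{eq1945th}. De-scaled, your compactness scheme collapses to what the paper does directly: a local $W^{2,p}$ estimate plus Sobolev embedding on $B(y_0,2r)$ for $Dv$, and the interior $C^2$ estimate of Dong--Kim (with the Dini integral of $\omega_f$ on the right-hand side) for $D^2v$, each applied with $r=\tfrac15\abs{y_0-x_0}$.
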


\begin{remark}
As a matter of fact, the proof of Theorem~\ref{thm01} shall reveal that we have
\[
\lim_{x\to x_0}\, \abs{x-x_0}^{n-2} \,\abs{G^*(x,x_0)-G_{x_0}^\ast(x,x_0)} =0,
\]
where $G^\ast$ and $G_{x_0}^\ast$ are the Green's functions for the adjoint operators $L^\ast$ and $L_{x_0}^\ast$, respectively.
Therefore, \eqref{eq1726sun} can be regarded as the adjoint version of \eqref{eq1725sun}, and vice versa.
\end{remark}

\begin{remark}
We note that the estimates \eqref{eq1725sun} and \eqref{eq1726sun} are reminiscent of \cite[Eq.~(23)]{DM95}.
However, the method used in \cite{DM95} is not applicable to our setting because a local $L^\infty$ estimates corresponding to \cite[Lemma~4]{DM95} is not available  via $L^p$ theory for adjoint solutions of $L^\ast u =\dv^2 \mathbf{f}$.
Nevertheless, under the stronger condition that $\mathbf{A}\in C^\beta$ for some $\beta\in (0,1)$, similar to \cite[Theorem~2]{DM95}, we obtain
\begin{align*}
\abs{G(x_0, x)-G_{x_0}(x_0, x)} &=O\left(\abs{x-x_0}^{2-n+\beta}\right)\quad\text{as}\quad x \to x_0,\\
\abs{D^k_x G(x, x_0)-D^k_x G_{x_0}(x, x_0)} &=O\left(\abs{x-x_0}^{2-n-k+\beta}\right)\quad\text{as}\quad x \to x_0.\quad (k=0,1,2).
\end{align*}
Indeed, by modifying the proof of \cite[Theorem~2]{DM95}  and using the global Lorentz type estimate \cite[Lemmas 1 and 2]{DM95} and interior Schauder estimates for the solutions of $L^\ast u = \dv^2 \mathbf{f}$ and $Lu=f$, one can get the above estimates.
\end{remark}

A few further remarks are in order.
We point out that the proofs for Theorems \ref{thm01} and \ref{thm02} are readily extendable to elliptic systems, but they do not work in the two dimensional setting. See \cite{DK21} for the results regarding the two dimensional case as well as for a discussion on how to extend them to elliptic systems.
In \cite{KL21}, we assume that $\Omega$ is a bounded $C^{2,\alpha}$ domain.
This assumption was adopted there to obtain a global pointwise estimate for $D_{x}^2 G(x,y)$ only.
To prove that the pointwise bound \eqref{bound1} holds globally in $\Omega$, we just need a local $L^\infty$ estimate (valid up to the boundary) for solutions of the adjoint equation
\[
L^\ast u := D_{ij}(a^{ij}u)=0,
\]
which has been established earlier in \cite{DK17, DEK18}.
We particularly refer to \cite[Theorem~1.8]{DEK18}, where $\Omega$ is assumed to be a bounded $C^{1,1}$ domain.
However, in the scalar case, it is not even necessary to assume that $\Omega$ is a bounded $C^{1,1}$ domain for the global pointwise bound \eqref{bound1} to hold.
As it will be indicated in the proof of Theorem~\ref{thm01}, in the scalar case, it is enough to assume that $\Omega$ is a bounded $C^{1,\alpha}$ domain with $\alpha>\frac{n-1}{n}$ in order to utilize the $L^p$ solvability of the problem \eqref{adj_eq} for some $p>n/2$.
Also, since  we deal with the asymptotic estimates \eqref{eq1725sun} and \eqref{eq1726sun}, which are  interior estimates in nature, we only need an interior $L^\infty$ estimate for adjoint solutions to establish these estimates.

Finally, we should mention that there are many interesting papers dealing with the Green's functions or the fundamental solutions of non-divergence form elliptic and parabolic operators. See, for example,  \cite{Bauman84b, Bauman85, FS84, FGMS88, Krylov92, Esc2000, Cho11, MMc2010} and references therein.

\section{Preliminary lemmas}
In this section, we present some lemmas which will be used in the proof of Theorems \ref{thm01} and \ref{thm02}.
We need to consider the boundary value problem
\begin{equation}				\label{adj_eq}
L^\ast v = \dv^2 \mathbf{g} + f\;\text{ in }\;\Omega,\quad v=\frac{\mathbf{g} \nu\cdot \nu}{\mathbf{A}\nu\cdot \nu}\;\text{ on }\;\partial \Omega,
\end{equation}
where $\mathbf{g}=(g^{ij})$ is an $n \times n$ matrix-valued function,
\[
\dv^2 \mathbf{g}:=D_{ij}g^{ij},
\]
and $\nu$ is the unit exterior normal vector of $\partial\Omega$.
For $\mathbf{g} \in L^p(\Omega)$ and $f \in L^p(\Omega)$, where $1<p<\infty$ and $\frac{1}{p}+ \frac{1}{p'}=1$, we say that $v$ in $L^p(\Omega)$ is an adjoint solution of \eqref{adj_eq} if $v$ satisfies
\[
\int_\Omega v Lu = \int_\Omega \tr(\mathbf{g} D^2u) + \int_\Omega f u
\]
for any $u$ in $W^{2,p'}(\Omega) \cap W^{1,p'}_0(\Omega)$.

\begin{lemma}				\label{lem01}
Let $\Omega \subset \bR^n$ be a bounded $C^{1,1}$ domain.
Let $1<p<\infty$ and assume that $\mathbf{g} \in L^p(\Omega)$ and $f \in L^p(\Omega)$.
Then there exists a unique adjoint solution $u$ in $L^p(\Omega)$.
Moreover, we have the estimate
\[
\norm{u}_{L^p(\Omega)} \le C \left( \norm{\mathbf g}_{L^p(\Omega)} + \norm{f}_{L^p(\Omega)} \right),
\]
where the constant $C$ depends on $\Omega$, $p$, $n$, $\lambda$, $\Lambda$, and $\omega_{\mathbf A}$.
\end{lemma}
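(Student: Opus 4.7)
The plan is to prove existence and the estimate by a duality argument against the forward Dirichlet problem for $L$, and then to get uniqueness from the surjectivity built into the same duality.

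First I would invoke the forward $W^{2,p'}$ solvability: for every $h \in L^{p'}(\Omega)$ there is a unique $u \in W^{2,p'}(\Omega) \cap W^{1,p'}_0(\Omega)$ solving $Lu = h$, with
\[
\norm{u}_{W^{2,p'}(\Omega)} \le C \norm{h}_{L^{p'}(\Omega)}.
\]
Since $\mathbf{A}$ is of Dini mean oscillation, we may assume (by the modification discussed in the introduction) that $\mathbf{A}$ is uniformly continuous on $\overline{\Omega}$; in particular $\mathbf{A}\in \mathrm{VMO}(\overline{\Omega})$, so the classical $W^{2,p}$ theory of Chiarenza--Frasca--Longo, together with its global formulation for $C^{1,1}$ domains (cf.\ \cite{DEK18} and the references there), applies for every $1<p'<\infty$.

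With the forward solver $S : h \mapsto u_h$ in hand, I would define the linear functional
\[
T(h) := \int_\Omega \tr(\mathbf{g}\, D^2 u_h) + \int_\Omega f\, u_h, \qquad h \in L^{p'}(\Omega).
\]
H\"older's inequality combined with the forward estimate gives
\[
\abs{T(h)} \le \norm{\mathbf{g}}_{L^p(\Omega)} \norm{D^2 u_h}_{L^{p'}(\Omega)} + \norm{f}_{L^p(\Omega)} \norm{u_h}_{L^{p'}(\Omega)} \le C\bigl(\norm{\mathbf{g}}_{L^p(\Omega)} + \norm{f}_{L^p(\Omega)}\bigr)\, \norm{h}_{L^{p'}(\Omega)}.
\]
Hence $T$ is a bounded linear functional on $L^{p'}(\Omega)$, and by the Riesz representation theorem there is a unique $v \in L^p(\Omega)$ such that $T(h) = \int_\Omega v\, h = \int_\Omega v \, Lu_h$ for every $h$, with the same bound on $\norm{v}_{L^p(\Omega)}$. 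Since every $u\in W^{2,p'}(\Omega)\cap W^{1,p'}_0(\Omega)$ arises as some $u_h$ (namely $h=Lu$), this $v$ is exactly an adjoint solution in the sense defined in the paper; the boundary condition $v = (\mathbf{g}\nu\cdot\nu)/(\mathbf{A}\nu\cdot\nu)$ is automatically encoded in the weak formulation, as one sees by formally integrating by parts twice against a smooth test function $u$ with $u|_{\partial\Omega}=0$ and using $D_j u = \nu_j \partial_\nu u$ on $\partial\Omega$.

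For uniqueness, suppose $v \in L^p(\Omega)$ satisfies $\int_\Omega v \, Lu = 0$ for every $u \in W^{2,p'}(\Omega) \cap W^{1,p'}_0(\Omega)$. The surjectivity half of the $W^{2,p'}$ solvability means the set $\{Lu\}$ exhausts $L^{p'}(\Omega)$, so $\int_\Omega v\, h = 0$ for all $h \in L^{p'}(\Omega)$, which forces $v = 0$. The main (nontrivial) input, and the only place where the hypotheses on $\Omega$ and $\mathbf{A}$ really enter, is the global $W^{2,p'}$ solvability for the forward problem; once that is quoted, the rest is a standard duality and Riesz-representation argument.
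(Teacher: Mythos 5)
Your proof is correct and is essentially the argument the paper relies on: the paper simply cites \cite[Lemma~2]{EM2016}, and that lemma is proved by exactly this duality — global $W^{2,p'}$ solvability of the forward Dirichlet problem for VMO (here, Dini mean oscillation) coefficients in a $C^{1,1}$ domain, followed by Riesz representation in $L^p$ and uniqueness via surjectivity of $L$. You have correctly identified the one nontrivial input (the forward $W^{2,p'}$ theory) and correctly observed that the stated boundary condition is already encoded in the paper's weak formulation, so nothing further is needed.
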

\begin{proof}
See  \cite[Lemma~2]{EM2016}.
\end{proof}

\begin{lemma}			\label{lem02}
Let $R_0>0$ and $\mathbf{g}=(g^{ij})$ be of Dini mean oscillation in $B(x_0, R_0)$.
Suppose $u$ is an $L^2$ solution of
\[
L^\ast u= \dv^2 \mathbf{g}\;\text{ in }\;B(x_0, 2r),
\]
where $0<r \le \frac12 R_0$.
Then we have
\[
\norm{u}_{L^\infty(B(x_0,r))} \le C \left(\fint_{B(x_0,2r)} \abs{u} + \int_0^{r} \frac{\omega_{\mathbf g}(t, B(x_0,2r))}{t}\,dt \right),
\]
where $C=C(n,\lambda, \Lambda, \omega_{\mathbf A}, R_0)$.
\end{lemma}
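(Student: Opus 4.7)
The plan is to adapt the Campanato-type dyadic iteration developed in \cite{DK17,DEK18} for the homogeneous adjoint equation $L^\ast w=0$ with coefficients of Dini mean oscillation to the inhomogeneous setting $L^\ast u = \dv^2 \mathbf g$. After translating, I assume $x_0=0$ and write $B_s := B(0,s)$. The aim is a dyadic recursive bound on $\fint_{B_\rho}|u|$ (and ultimately $\|u\|_{L^\infty}$), with error terms at each scale coming from the $L^1$ mean oscillation of $\mathbf g$ that end up Dini-summable.

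Fix $\rho_k := 2^{-k} r$ and $\bar{\mathbf g}_k := \fint_{B_{\rho_k}} \mathbf g$. Since each $B_{\rho_k}$ is a $C^{1,1}$ domain, Lemma~\ref{lem01} applies. On $B_{\rho_k}$ I solve the auxiliary adjoint problem
\[
L^\ast v_k = \dv^2(\mathbf g - \bar{\mathbf g}_k)\;\text{ in }\;B_{\rho_k},\qquad v_k = \frac{(\mathbf g - \bar{\mathbf g}_k)\nu\cdot\nu}{\mathbf A\nu\cdot\nu}\;\text{ on }\;\partial B_{\rho_k},
\]
and set $w_k := u - v_k$. Since $\dv^2$ annihilates constants, $L^\ast w_k = 0$ in $B_{\rho_k}$, so the interior $L^\infty$ estimate for homogeneous adjoint solutions from \cite{DEK18} (available precisely because $\mathbf A$ is of Dini mean oscillation) yields $\|w_k\|_{L^\infty(B_{\rho_k/2})} \le C \fint_{B_{\rho_k}}|w_k|$. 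Meanwhile, Lemma~\ref{lem01} applied with some $p>1$ provides
\[
\|v_k\|_{L^p(B_{\rho_k})} \le C \|\mathbf g - \bar{\mathbf g}_k\|_{L^p(B_{\rho_k})},
\]
and the right-hand side is to be controlled, at each scale, by the $L^1$ mean oscillation $\omega_{\mathbf g}(\rho_k,B(0,2r))$, exploiting the essential boundedness of $\mathbf g$ implied by its Dini mean oscillation and interpolating between $L^1$ and $L^\infty$.

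Combining the two bounds inside the iteration, and treating the leftover $v_k$ contribution by re-decomposing it at the next finer scale (this is needed because Lemma~\ref{lem01} delivers only an $L^p$ bound, not $L^\infty$, on $v_k$), produces
\[
\|u\|_{L^\infty(B_r)} \le C\Bigl(\fint_{B_{2r}}|u| + \sum_{k\ge 0}\omega_{\mathbf g}(\rho_k,B(0,2r))\Bigr) \le C\Bigl(\fint_{B_{2r}}|u| + \int_0^{r}\frac{\omega_{\mathbf g}(t,B(0,2r))}{t}\,dt\Bigr).
\]
The main obstacle is the bridge between the $L^p$ solvability of Lemma~\ref{lem01} and the $L^1$-based Dini condition: the interpolation-type control of the $L^p$ mean oscillation of $\mathbf g$ by its $L^1$ counterpart requires choosing $p>1$ close enough to $1$ depending on the decay of $\omega_{\mathbf g}$, and the iteration must be run carefully so that losses across scales stay summable. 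A secondary subtlety is verifying that the boundary trace produced by Lemma~\ref{lem01} is absorbed by the same $L^p$ bound, which is standard given the pointwise inequality $|v_k|\le|\mathbf g-\bar{\mathbf g}_k|/\lambda$ on $\partial B_{\rho_k}$.
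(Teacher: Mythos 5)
The paper does not reprove this lemma; it simply cites \cite[Lemma~2.2]{KL21}, whose proof (following \cite{DK17, DEK18}) runs a Campanato-type iteration in which the inhomogeneity is handled by \emph{weak type-$(1,1)$} estimates for the frozen, constant-coefficient adjoint problem, measured in $L^{1/2}$ quasi-norm averages. Your sketch has the right overall flavor (dyadic decomposition $u=v_k+w_k$, homogeneous interior estimate for $w_k$, Dini summation), but the specific mechanism you propose for the inhomogeneous part does not work.

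The decisive gap is the interpolation step. Lemma~\ref{lem01} only gives an $L^p$ bound with $p>1$, and interpolating $\|\mathbf g-\bar{\mathbf g}_k\|_{L^p(B_{\rho_k})}$ between $L^1$ and $L^\infty$ yields $\bigl(\fint_{B_{\rho_k}}|\mathbf g-\bar{\mathbf g}_k|^p\bigr)^{1/p}\le C\,\omega_{\mathbf g}(\rho_k)^{1/p}$, i.e.\ you lose a power: the per-scale error is $\omega_{\mathbf g}(\rho_k)^{1/p}$, not $\omega_{\mathbf g}(\rho_k)$. The Dini condition on $\omega_{\mathbf g}$ does not imply $\sum_k\omega_{\mathbf g}(\rho_k)^{1/p}<\infty$ for any fixed $p>1$ (take $\omega_{\mathbf g}(t)=(\log\frac1t)^{-1}(\log\log\frac1t)^{-2}$, which is Dini while $\omega_{\mathbf g}^{1/p}$ is not for every $p>1$), so no choice of $p$ ``close enough to $1$'' rescues the summation, and your final bound cannot be closed with the stated right-hand side. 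This is exactly why the actual proof replaces the strong $L^p$ theory by the weak-$(1,1)$ estimate $|\{|v_k|>s\}|\le C\|\mathbf h\|_{L^1}/s$ for the constant-coefficient adjoint equation $L_{x_1}^\ast v_k=\dv^2\mathbf h$ with $\mathbf h=(\mathbf A_{x_1}-\mathbf A)u+\mathbf g-\bar{\mathbf g}_k$, which converts into $\bigl(\fint|v_k|^{1/2}\bigr)^{2}\le C\fint|\mathbf h|$ and keeps the first power of $\omega_{\mathbf g}$ (and of $\omega_{\mathbf A}$). A second, structural problem: your recursion is run on the plain averages $\fint_{B_{\rho_k}}|u|$, and the homogeneous estimate $\|w_k\|_{L^\infty(B_{\rho_k/2})}\le C\fint_{B_{\rho_k}}|w_k|$ carries a constant $C>1$, so the resulting inequality $A_{k+1}\le C(A_k+\varepsilon_k)$ blows up like $C^k$. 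One must instead iterate a mean-oscillation (Campanato) quantity for $u$ that genuinely decays from scale to scale and then telescope the approximating constants; note that constants are not $L^\ast$-harmonic, which is another reason the comparison has to be made with the frozen-coefficient operator rather than with $L^\ast$ itself.
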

\begin{proof}
See \cite[Lemma~2.2]{KL21}.
\end{proof}

\section{Proof of Theorem~\ref{thm01}}
We slightly modify the argument in \cite[\S3.5]{KL21}.
Let us denote
\[
\mathbf{A}_{x_0}=\mathbf{A}(x_0), \quad L_{x_0} u := a^{ij}(x_0) D_{ij}u= \tr(\mathbf{A}_{x_0} D^2 u),
\]
and let $G_{x_0}(x,y)$ be the Green's function for $L_{x_0}$.
Since $L_{x_0}$ is an elliptic operator with constant coefficients, the existence of $G_{x_0}$ as well as the following pointwise bound is well known:
\begin{equation}				\label{eq1224th}
\abs{G_{x_0}(x,y)} \le  C\abs{x-y}^{2-n}\quad \quad (x\neq y),
\end{equation}
where $C=C(n, \lambda, \Lambda)$.
Moreover, since $\mathbf{A}_{x_0}$ is constant and symmetric, we have $L_{x_0}=L_{x_0}^\ast$ and $G_{x_0}$ is also symmetric, i.e.,
\begin{equation}				\label{eq1350m}
G_{x_0}(x,y)=G_{x_0}^\ast(y,x)=G_{x_0}(y,x) \qquad (x\neq y).
\end{equation}

Let us set
\[
\mathbf{g}=-(\mathbf{A}- \mathbf{A}_{x_0}) G_{x_0}(\cdot, x_0)
\]
and consider the problem
\begin{equation}				\label{eq1042th}
L^\ast v = \dv^2 \mathbf{g}\;\text{ in }\; \Omega,\quad
v=\frac{\mathbf{g}\nu\cdot \nu}{\mathbf{A}\nu\cdot \nu} \;\text{ on }\;\partial \Omega.
\end{equation}
Notice that the boundary condition in \eqref{eq1042th} simply reads that $v=0$ in $\partial\Omega$ since the Green's function $G_{x_0}(\cdot, x_0)$ vanishes on $\partial\Omega$.

By using that $\mathbf A$ is of Dini mean oscillation in $\Omega$, it is easy to verify that $\mathbf{g} \in L^q(\Omega)$ for  $q \in(1, \frac{n}{n-2})$; see \cite[Lemma~3.1]{KL21}.
Therefore, by Lemma~\ref{lem01}, there is a unique $v$ that belongs to $L^q(\Omega)$ for $q \in (1,\frac{n}{n-2})$.
Then, by the same argument as in \cite[\S3.1]{KL21}, we find that
\begin{equation}				\label{eq1531m}
G^\ast(\cdot, x_0):=G_{x_0}(\cdot, x_0) + v
\end{equation}
becomes the Green's function of $L^\ast$ in $\Omega$ with a pole at $x_0$.
Also, by following the same argument as in \cite[\S3.4]{KL21}, we find that the function $G(x,y)$ given by
\[
G(x,y)=G^\ast(y,x)\qquad (x\neq y)
\]
is the Green's function for $L$ in $\Omega$.
Note that we only need the assumption that $\Omega$ is a bounded $C^{1,1}$ domain to get these conclusions.
In fact, in the scalar case, we can even relax this assumption further to $\Omega$ being a $C^{1,\alpha}$ domain for some $\alpha >(n-1)/n$. See \cite[Appendix]{DK21}.

Now, for $y_0 \in \Omega$ with $y_0 \neq x_0$, let
\begin{equation}				\label{eq0043sat}
r=\tfrac15 \abs{y_0-x_0}\quad\text{and}\quad\delta=\min(\kappa r, \diam \Omega),
\end{equation}
where $\kappa>0$ is to be determined.
We assume that $r< \frac17 \mathrm{dist}(x_0,\partial\Omega)$ is so that $B(y_0, 2r)\subset \Omega$.
Let  $\zeta$ be a smooth function on $\bR^n$ such that
\[
0\le \zeta \leq 1, \quad \zeta=0 \;\text{ in }\;B(x_0,\delta/2),\quad \zeta=1\;\text{ in }\bR^n\setminus B(x_0,\delta),\quad \abs{D \zeta} \le 4/\delta.
\]
We then define $\mathbf{g}_1$ and $\mathbf{g}_2$ by
\begin{equation*}
\mathbf{g}_1= -\zeta(\mathbf{A}- \mathbf{A}_{x_0}) G_{x_0}(\cdot, x_0)\quad\text{and}\quad \mathbf{g}_2= -(1-\zeta)(\mathbf{A}-\mathbf{A}_{x_0}) G_{x_0}(\cdot, x_0).
\end{equation*}
Recall the definition \eqref{eq_rho} and note that $\norm{\mathbf A-\mathbf A_{x_0}}_\infty \le C(n, \Lambda)$.

Choose $p_1 \in (\frac{n}{n-2},\infty)$ and $p_2 \in (1, \frac{n}{n-2})$; e.g., take $p_1=\frac{2n}{n-2}$ and $p_2=\frac{n-1}{n-2}$.
By \eqref{eq1224th} and properties of $\zeta$, we then have
\begin{align}
					\label{eq1730f}
\int_\Omega \abs{\mathbf{g}_1}^{p_1} &\le C  \int_{\Omega\setminus \Omega(x_0,\delta/2)} \abs{x-x_0}^{(2-n)p_1}\,dx \le C\delta^{(2-n)p_1+n},\\
						\label{eq1432f}
\int_\Omega\, \abs{\mathbf{g}_2}^{p_2}& \le C \varrho_{\mathbf A}(\delta,x_0)^{p_2} \int_{\Omega(x_0,\delta)} \abs{x-x_0}^{(2-n)p_2}dx \le C \varrho_{\mathbf A}(\delta,x_0)^{p_2} \delta^{(2-n)p_2+n}.
\end{align}
Let $v_i$  be the solutions of the problems
\[
L^\ast v_i = \dv^2 \mathbf{g}_i\;\text{ in }\; \Omega,\quad
v_i=\frac{\mathbf{g}_i\nu\cdot \nu}{\mathbf{A}\nu\cdot \nu} \;\text{ on }\;\partial \Omega. \qquad(i=1,2)
\]
By Lemma~\ref{lem01} together with \eqref{eq1730f} and \eqref{eq1432f}, we have
\begin{equation}				\label{eq1618f}
\norm{v_1}_{L^{p_1}(\Omega)} \le C\delta^{2-n+\frac{n}{p_1}} \qquad\text{and}\qquad
\norm{v_2}_{L^{p_2}(\Omega)} \le C\varrho_{\mathbf A}(\delta,x_0) \delta^{2-n+\frac{n}{p_2}}.
\end{equation}
Since $\mathbf g_1$ and $\mathbf g_2$ also belong to $L^{q}(\Omega)$ for $q \in (1,\frac{n}{n-2})$, we have $v_1+v_2 \in L^q(\Omega)$ for $q\in (1,\frac{n}{n-2})$.
Therefore, we conclude that
\[
v=v_1+v_2
\]
by the uniqueness of solutions to the problem \eqref{eq1042th}.

Now, we estimate
$v_1(y_0)$ and $v_2(y_0)$ as follows.
By Lemma~\ref{lem02}, we have
\begin{equation}				\label{eq1335sat}
\abs{v_i(y_0)} \le  C \fint_{B(y_0,2r)} \abs{v_i} + C \int_0^{r} \frac{\omega_{\mathbf{g}_i}(t, B(y_0,2r))}{t}\,dt,\qquad (i=1,2).
\end{equation}
Using \eqref{eq1618f} together with H\"older's inequalities, we have
\begin{equation}				\label{eq1340sat}
\begin{aligned}
\fint_{B(y_0,2r)} \abs{v_1} &\le C r^{-\frac{n}{p_1}}  \norm{v_1}_{L^{p_1}(B(y_0, 2r))} \le C r^{-\frac{n}{p_1}}  \delta^{2-n+\frac{n}{p_1}},\\
\fint_{B(y_0,2r)} \abs{v_2} &\le Cr^{-\frac{n}{p_2}}  \norm{v_2}_{L^{p_2}(B(y_0, 2r))} \le C\varrho_{\mathbf A}(\delta,x_0) r^{-\frac{n}{p_2}}  \delta^{2-n+\frac{n}{p_2}}.
\end{aligned}
\end{equation}
The following technical lemma is a variant of \cite[Lemma~3.2]{KL21}.
In fact, the proof is much shorter under the assumption that  $\mathbf A$ is continuous at $x_0$.
\begin{lemma}			\label{lem03}
Let $\eta$ be a Lipschitz function on $\bR^n$ such that $0 \le \eta \le 1$ and $\abs{D \eta} \le 4/\delta$ for some $\delta>0$.
Let
\[
\mathbf{g}=-\eta (\mathbf{A}-\mathbf{A}_{x_0}) G_{x_0}(\cdot, x_0)
\]
and for $y_0 \in \Omega$ with $y_0 \neq x_0$, let  $r=\frac15 \abs{x_0-y_0}$.
Then, for any $t \in (0, r]$ we have
\[
\omega_{\mathbf{g}}(t, \Omega(y_0,2r)) \le C r^{2-n}\left( \omega_{\mathbf A}(t)+  \,\frac{r+\delta}{\delta r} \,\varrho_{\mathbf A}(8r,x_0) t \right),
\]
where $C=C(n, \lambda, \Lambda, \Omega)$.
\end{lemma}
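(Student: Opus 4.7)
The plan is to write $\mathbf{g}$ as the triple product $\eta \cdot (\mathbf{A} - \mathbf{A}_{x_0}) \cdot G_{x_0}(\cdot, x_0)$ and to estimate the mean oscillation of each factor separately, exploiting the fact that the region $\Omega(y_0, 2r)$ is well separated from the pole $x_0$. The key geometric observation is that since $|y_0 - x_0| = 5r$, for every $x \in \Omega(y_0, 2r)$ and every $z \in \Omega(x, t)$ with $t \le r$, the triangle inequality gives $2r \le |z - x_0| \le 8r$. In particular, on such $\Omega(x, t)$ the constant-coefficient Green's function is smooth with the classical pointwise bounds
\[
\|G_{x_0}(\cdot, x_0)\|_{L^\infty(\Omega(x,t))} \le C r^{2-n}, \qquad \|D G_{x_0}(\cdot, x_0)\|_{L^\infty(\Omega(x,t))} \le C r^{1-n},
\]
and the continuity of $\mathbf{A}$ at $x_0$ yields $\|\mathbf{A} - \mathbf{A}_{x_0}\|_{L^\infty(\Omega(x,t))} \le \varrho_{\mathbf{A}}(8r, x_0)$.

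Next, I would apply the elementary product inequality for mean oscillation: for bounded $u_1, u_2, u_3$ on a set $B$,
\[
\omega_{u_1 u_2 u_3}(B) \le C \sum_{j=1}^{3} \Bigl(\prod_{i \ne j} \|u_i\|_{L^\infty(B)}\Bigr)\, \omega_{u_j}(B),
\]
which follows by telescopically decomposing $u_1 u_2 u_3 - c_1 c_2 c_3$ for suitable constants $c_i$. Applying this with $u_1 = \eta$, $u_2 = \mathbf{A} - \mathbf{A}_{x_0}$, $u_3 = G_{x_0}(\cdot, x_0)$ on $B = \Omega(x, t)$, and inserting the individual oscillation bounds $\omega_\eta(t) \le C t/\delta$ (from $|D\eta| \le 4/\delta$), $\omega_{\mathbf{A} - \mathbf{A}_{x_0}}(t, x) \le \omega_{\mathbf{A}}(t)$, and $\omega_{G_{x_0}(\cdot, x_0)}(t, x) \le C r^{1-n} t$ (from the gradient estimate above), produces three terms of sizes $C r^{2-n}\,\omega_{\mathbf{A}}(t)$, $C \varrho_{\mathbf{A}}(8r, x_0)\, r^{1-n}\, t$, and $C \varrho_{\mathbf{A}}(8r, x_0)\, r^{2-n}\, t/\delta$. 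Factoring out $r^{2-n}$ and using $1/r + 1/\delta = (r + \delta)/(r\delta)$ collapses these into exactly the stated bound, and taking the supremum over $x \in \Omega(y_0, 2r)$ completes the argument.

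The proof is essentially bookkeeping, so I do not anticipate any serious obstacle; the only substantive ingredients are the product oscillation inequality and the classical interior estimates for the constant-coefficient Green's function on a ball of radius $O(r)$ sitting at distance $O(r)$ from the pole $x_0$. The role of the continuity of $\mathbf{A}$ at $x_0$ is precisely to force $\varrho_{\mathbf{A}}(8r, x_0) \to 0$ as $r \to 0$, which in turn makes the two \emph{error} contributions in the bound asymptotically negligible compared to the leading $\omega_{\mathbf{A}}(t)$ term when the estimate is later fed into Lemma~\ref{lem02}.
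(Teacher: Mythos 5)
Your proposal is correct and follows essentially the same route as the paper: the paper also telescopes the product (grouping $G_{x_0}(\cdot,x_0)\eta$ as one factor and $\mathbf{A}-\mathbf{A}_{x_0}$ as the other, which yields exactly your three contributions $Cr^{2-n}\omega_{\mathbf A}(t)$, $C\varrho_{\mathbf A}(8r,x_0)r^{1-n}t$, and $C\varrho_{\mathbf A}(8r,x_0)r^{2-n}t/\delta$), using the same separation $2r\le\abs{z-x_0}<8r$, the pointwise and gradient bounds on $G_{x_0}$, and the Lipschitz bound on $\eta$. The only difference is the cosmetic choice of a three-factor versus two-factor product-oscillation inequality.
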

\begin{proof}
For $\bar x \in \Omega(y_0, 2r)$ and $0<t \le r$, we have
\begin{align*}
\omega_{\mathbf{g}}(t,\bar x)&=\fint_{\Omega(\bar x,t)} \,\Abs{(\mathbf A-\mathbf A_{x_0}) G_{x_0}(\cdot, x_0) \eta- \overline{((\mathbf A-\mathbf A_{x_0}) G_{x_0}(\cdot, x_0) \eta)}_{\Omega(\bar x, t)}}\\
&\le \fint_{\Omega(\bar x,t)} \,\Abs{(\mathbf{A}-\mathbf A_{x_0}) G_{x_0}(\cdot, x_0) \eta- \overline{(\mathbf{A}-\mathbf{A}_{x_0})}_{\Omega(\bar x, t)} G_{x_0}(\cdot, x_0) \eta}\\
&\qquad \quad+ \fint_{\Omega(\bar x,t)} \,\Abs{\overline{(\mathbf{A}-\mathbf{A}_{x_0})}_{\Omega(\bar x, t)} G_{x_0}(\cdot, x_0) \eta- (\overline{(\mathbf A-\mathbf A_{x_0}) G_{x_0}(\cdot, x_0) \eta})_{\Omega(\bar x, t)}}\\
&=:I+II.
\end{align*}
Observe that we have $\dist(x_0, \Omega(\bar x, t)) \ge 2r$ and thus for $x$, $y \in \Omega(\bar x,t)$, by \eqref{eq1224th} and the interior gradient estimate for elliptic equations with constant coefficients, we have
\begin{equation}				\label{eq1225th}
\abs{G_{x_0}(x, x_0)- G_{x_0}(y, x_0)} \le C t r^{1-n},
\end{equation}
where $C=C(n,\lambda, \Lambda, \Omega)$.
Since $\dist(x_0, \Omega(\bar x, t)) \ge 2r$, by using \eqref{eq1224th} we obtain
\begin{align}				\nonumber
I &\le \fint_{\Omega(\bar x, t)} \,\Abs{(\mathbf{A}-\mathbf{A}_{x_0})- \overline{(\mathbf{A}-\mathbf{A}_{x_0})}_{\Omega(\bar x, t)}} \abs{G_{x_0}(\cdot, x_0)}\\
						\label{eq2222th}
&\le \fint_{\Omega(\bar x, t)} \,C r^{2-n} \abs{\mathbf{A}-\bar{\mathbf A}_{\Omega(\bar x,t)}} \le C r^{2-n} \omega_{\mathbf A}(t).
\end{align}
Also, we have
\begin{align}				\nonumber
II &\le \fint_{\Omega(\bar x, t)} \,\Abs{\fint_{\Omega(\bar x, t)}(\mathbf{A}(y)-\mathbf{A}(x_0)) \left(G_{x_0}(x, x_0)\eta(x)- G_{x_0}(y, x_0)\eta(y)\right) dy} dx \\
						\label{eq2223th}
&\le \fint_{\Omega(\bar x, t)} \fint_{\Omega(\bar x, t)} \abs{\mathbf{A}(y)-\mathbf{A}(x_0)}\, \abs{G_{x_0}(x, x_0)\eta(x)- G_{x_0}(y, x_0)\eta(y)}\,dy\,dx.
\end{align}
By using \eqref{eq1224th}, \eqref{eq1225th} and $\abs{D \eta} \le 4/\delta$, we have  for $x$, $y \in \Omega(\bar x, t)$ that
\begin{align}				\nonumber
&\abs{G_{x_0}(x, x_0)\eta(x)- G_{x_0}(y, x_0)\eta(y)}\\
						\nonumber
&\qquad\qquad  \le \abs{G_{x_0}(x, x_0)- G_{x_0}(y, x_0)}\,\abs{\eta(x)} + \abs{G_{x_0}(y, x_0)}\,\abs{\eta(x)- \eta(y)}\\
						\label{eq2224th}						
&\qquad\qquad  \le C t r^{1-n}+C r^{2-n} t/\delta \le C t r^{1-n}(1+r/\delta).
\end{align}
Plugging \eqref{eq2224th} into \eqref{eq2223th} and noting that for $y \in \Omega(\bar x,t)$, we have
\[
\abs{y-x_0} \le \abs{y-\bar x}+\abs{\bar x-y_0}+\abs{y_0-x_0} <t+2r+5r=8r,
\]
we obtain
\begin{equation}				\label{eq2226th}					
II \le C t r^{1-n}(1+r/\delta) \varrho_{\mathbf A}(8r,x_0).
\end{equation}
Combining \eqref{eq2222th} and \eqref{eq2226th}, we have (recall $t\le r$)
\begin{equation*}			
\omega_{\mathbf{g}}(t, \bar x) \le I+II \le C r^{2-n}\left( \omega_{\mathbf A}(t)+ t(1/r+1/\delta)\varrho_{\mathbf A}(8r,x_0) \right).
\end{equation*}
The lemma is proved by taking supremum over $\bar x \in \Omega(y_0, 2r)$.
\end{proof}

By applying Lemma~\ref{lem03} with $\eta=\zeta$ and $\eta=1-\zeta$, respectively, we have
\begin{equation}				\label{eq1331sat}
\int_0^{r} \frac{\omega_{\mathbf{g}_i}(t, B(y_0,2r))}{t}\,dt \le C r^{2-n} \left(\int_0^r \frac{\omega_{\mathbf A}(t)}{t}\,dt+\frac{\delta+r}{\delta}\varrho_{\mathbf A}(8r,x_0) \right)
\quad (i=1,2).
\end{equation}
Now we substitute \eqref{eq1340sat} and \eqref{eq1331sat} back to \eqref{eq1335sat} to obtain (recall $v=v_1+v_2$)
\begin{multline}				\label{eq1406sat}
\abs{v(y_0)}
\le C r^{2-n}\left( r^{n-2-\frac{n}{p_1}}\delta^{2-n+\frac{n}{p_1}}+ \varrho_{\mathbf A}(\delta,x_0)r^{n-2-\frac{n}{p_2}}\delta^{2-n+\frac{n}{p_2}} \right.\\
\left.+\int_0^r \frac{\omega_{\mathbf A}(t)}{t}\,dt + \frac{\delta+r}{\delta}\varrho_{\mathbf A}(8r,x_0) \right).
\end{multline}
For any $\epsilon >0$, we can choose $\kappa$ sufficiently large so that
$\kappa^{2-n+\frac{n}{p_1}}< \epsilon/2$.
Then choose $r_0$ sufficiently small so that $r_0<\min \left(\frac17 \mathrm{dist}(x_0,\partial\Omega), \frac{5}{\kappa}\diam \Omega \right)$ and
\[
\varrho_{\mathbf A}(\kappa r_0,x_0) \kappa^{2-n+\frac{n}{p_2}} + \int_0^{r_0} \frac{\omega_{\mathbf A}(t)}{t}\,dt + \frac{\kappa+1}{\kappa}\varrho_{\mathbf A}(8r_0,x_0) <\frac12 \epsilon,
\]
which is possible due to \eqref{eq2114sat} and \eqref{eq2116sat}.
Then, it follows from \eqref{eq1531m}, \eqref{eq1406sat}, and \eqref{eq0043sat} that if $\abs{y_0-x_0}< r_0$, then we have
\begin{equation}				\label{eq0022sat}
\abs{G^\ast(y_0,x_0)-G_{x_0}(y_0,x_0)} \le C \epsilon  \abs{y_0-x_0}^{2-n}.
\end{equation}
Therefore, we conclude that
\[
\lim_{x\to x_0}\, \abs{x-x_0}^{n-2} \,\abs{G^*(x,x_0)-G_{x_0}(x,x_0)} =0
\]
because $\epsilon>0$ is arbitrary and \eqref{eq0022sat} holds for any $y_0 \in \Omega$ satisfying $0<\abs{y_0-x_0}<r_0$.
Finally, we obtain \eqref{eq1725sun} from the above by noting \eqref{eq1350m} and the relation $G(x,y)=G^\ast(y,x)$ for $x\neq y$.
\qed

\section{Proof of Theorem~\ref{thm02}}
We modify the proof of Theorem~\ref{thm01} as follows.
Notice that for $x_0\neq y_0$ we have
\[
G(y_0,x_0)- G_{x_0}(y_0, x_0)= G^\ast(x_0, y_0)- G_{x_0}(x_0,y_0),
\]
which suggests that we should consider
\[
\tilde v=\tilde v(x)= G^\ast(x, y_0)- G_{x_0}(x,y_0).
\]
Note that $\tilde v$ satisfies
\[
L^\ast \tilde v = \dv^2 \tilde{\mathbf g}\;\text{ in }\; \Omega,\quad
v=\frac{\tilde{\mathbf g}\nu\cdot \nu}{\mathbf{A}\nu\cdot \nu} \;\text{ on }\;\partial \Omega,
\]
where
\[
\tilde{\mathbf g}=-(\mathbf{A}- \mathbf{A}_{x_0}) G_{x_0}(\cdot, y_0).
\]
Let $r$ and $\delta$ be the same as in \eqref{eq0043sat}, and let $\tilde \zeta$ be a smooth function on $\bR^n$ such that
\[
0\le \tilde \zeta \leq 1, \quad \tilde \zeta=0 \;\text{ in }\;B(y_0,\delta/2),\quad \tilde \zeta=1\;\text{ in }\bR^n\setminus B(y_0,\delta),\quad \abs{D \tilde \zeta} \le 4/\delta.
\]
We define $\tilde{\mathbf g}_1$ and $\tilde{\mathbf g}_2$ by
\begin{equation*}
\tilde{\mathbf g}_1= -\tilde\zeta(\mathbf A- \mathbf{A}_{x_0}) G_{x_0}(\cdot, y_0)\quad\text{and}\quad \tilde{\mathbf g}_2= -(1-\tilde\zeta)(\mathbf A-\mathbf{A}_{x_0}) G_{x_0}(\cdot, y_0).
\end{equation*}
Similar to \eqref{eq1730f} and \eqref{eq1432f}, we have
\begin{align*}
\int_\Omega \abs{\tilde{\mathbf g}_1}^{p_1} &\le C  \int_{\Omega\setminus \Omega(y_0,\delta/2)} \abs{x-y_0}^{(2-n)p_1}\,dx \le C\delta^{(2-n)p_1+n},\\
\int_\Omega\, \abs{\tilde{\mathbf g}_2}^{p_2} & \le C  \varrho_{\mathbf A}(\delta + 5r,x_0)^{p_2} \int_{\Omega(y_0,\delta)} \abs{x-y_0}^{(2-n)p_2}dx \le C \varrho_{\mathbf A}(\delta+5r,x_0)^{p_2} \delta^{(2-n)p_2+n}.
\end{align*}
We shall assume that $r<\frac14 \mathrm{dist}(x_0,\partial\Omega)$ is so that $B_{2r}(x_0)\subset \Omega$.
Then by essentially the same proof of Lemma~\ref{lem03}, we have
\[
\omega_{\tilde{\mathbf g}_i}(t, B(x_0,2r)) \le C r^{2-n}\left( \omega_{\mathbf A}(t)+  \,\frac{r+\delta}{\delta r} \,\varrho_{\mathbf A}(3r,x_0) t \right).
\]
Then, similar to \eqref{eq1406sat}, we get
\begin{multline*}
\abs{\tilde v(x_0)} \le C r^{2-n}\left(  r^{n-2-\frac{n}{p_1}}\delta^{2-n+\frac{n}{p_1}}+ \varrho_{\mathbf A}(\delta+5r,x_0) r^{n-2-\frac{n}{p_2}}\delta^{2-n+\frac{n}{p_2}} \right.\\
\left. +\int_0^r \frac{\omega_{\mathbf A}(t)}{t}\,dt + \frac{\delta+r}{\delta}\varrho_{\mathbf A}(3r,x_0) \right).
\end{multline*}
By using the last inequality in place of \eqref{eq1406sat} and proceeding similarly as in the proof of Theorem~\ref{thm01}, we find that, similar to \eqref{eq0022sat}, for any $\epsilon>0$, there exists a positive $r_0<\frac14 \mathrm{dist}(x_0,\partial\Omega)$ such that if $\abs{x_0-y_0}< r_0$, then we have
\[
\abs{G^\ast(x_0,y_0)-G_{x_0}(x_0,y_0)} \le C \epsilon  \abs{y_0-x_0}^{2-n}.
\]
Since $\epsilon>0$ is arbitrary and the last inequality is true for any $y_0 \in \Omega$ satisfying $0<\abs{y_0-x_0}<r_0$, we obtain \eqref{eq1726sun} from the last inequality by using  the relations $G(x,y)=G^\ast(y,x)$ and $G_{x_0}(x,y)=G_{x_0}(y,x)$ for $x\neq y$.

To establish \eqref{eq1533th}, we note that $v:=G(\cdot, x_0)-G_{x_0}(\cdot, x_0)$ satisfies
\[
L v=-(\mathbf A- \mathbf A_{x_0})D^2 G_{x_0}(\cdot, x_0)=:f.
\]
For any $y_0 \in \Omega$ satisfying $0<\abs{y_0-x_0}<\frac14 \dist(x_0, \partial\Omega)$, let $r=\frac15 \abs{y_0-x_0}$.
Since  $G_{x_0}$ is the Green's function of constant coefficient operator $L_{x_0}$ and $B(x_0,20r) \subset \Omega$, we have
\[
\abs{D^k_x G_{x_0}(x, x_0)} \le C \abs{x-x_0}^{2-n-k},\quad \forall x \in B(x_0, 10r) \setminus \set{x_0},\;\; k=1,2,\ldots.
\]
Then we have
\begin{equation}					\label{eq1944th}
\norm{f}_{L^\infty(B(y_0, 3r))} \le C \varrho_{\mathbf A}(8r, x_0) r^{-n}.
\end{equation}
Also, by a similar computation as in the proof of Lemma~\ref{lem03}, we have
\begin{equation}					\label{eq1945th}
\omega_{f}(t, B(y_0,2r)) \le C r^{-n}\left( \omega_{\mathbf A}(t)+  \,\frac{t}{r} \,\varrho_{\mathbf A}(8r,x_0) \right).
\end{equation}

Therefore, by using \eqref{eq1726sun}, \eqref{eq1944th}, the local $W^{2,p}$ estimate
\begin{equation}					\label{eq1946th}
r^2 \norm{D^2 v}_{L^p(B_{2r}(y_0))}+ r \norm{D v}_{L^p(B_{2r}(y_0))}
\le C \norm{v}_{L^p(B_{3r}(y_0))} +Cr^2 \norm{f}_{L^p(B_{3r}(y_0))},
\end{equation}
and the Sobolev embedding, we have
\begin{equation}				\label{eq2248th}
r^{n-1} \abs{Dv(y_0)} =o(r),
\end{equation}
where we use $o(r)$ to denote some bounded quantity that tends to $0$ as $r\to 0$.

Moreover, similar to Lemma~\ref{lem02}, the proof of \cite[Theorem~1.6]{DK17} reveals that
\[
\norm{D^2 v}_{L^\infty(B(y_0,r))} \le C \left(\fint_{B(y_0,2r)} \abs{D^2 v} + \int_0^{r} \frac{\omega_{f}(t, B(y_0,2r))}{t}\,dt \right).
\]
Therefore, by \eqref{eq1944th}, \eqref{eq1946th}, and \eqref{eq1945th}, we have
\begin{equation}					\label{eq2249th}
r^n \abs{D^2 v(y_0)} = o(r).
\end{equation}
Now, \eqref{eq1533th} and \eqref{eq1534th} follow from \eqref{eq2248th} and \eqref{eq2249th}, respectively, recalling that $v=G(\cdot, x_0)-G_{x_0}(\cdot, x_0)$ and $r=\frac15 \abs{x_0-y_0}$.
\qed


\end{document}